\def\ss{\mathrm{ss}}
\def\RR{\mathbb{R}}
\def\ZZ{\mathbb{Z}}
\DeclareMathOperator\id{\mathrm{id}}
\DeclareMathOperator\ad{\mathrm{ad}}
\DeclareMathOperator\Ad{\mathrm{Ad}}
\DeclareMathOperator\Ht{\mathrm{ht}}
\def\fg{\mathfrak{g}}
\def\fp{\mathfrak{p}}
\def\sP{\mathcal{P}}
\theoremstyle{plain}
\newtheorem{lem}{Lemma}
\newtheorem{pro}{Proposition}
\newtheorem{thm}{Theorem}
\newtheorem*{lem*}{Lemma}
\newtheorem*{pro*}{Proposition}
\newtheorem*{thm*}{Theorem}
\newtheorem*{cor*}{Corollary}
\begin{document}
\begin{frontmatter}

\title{Wei-Norman equations for classical groups \emph{via} cominuscule induction}

\author[jg]{Jan Gutt}
\ead{jan.gutt@gmail.com}

\author[sc]{Szymon Charzy\'nski}
\ead{szycha@cft.edu.pl}

\author[jg]{Marek Ku\'s}
\ead{marek.kus@cft.edu.pl}

\address[jg]{Center for Theoretical Physics, Polish Academy of Sciences, Al. Lotnik\'ow 32/46, 02-668 Warszawa, Poland}

\address[sc]{Chair of Mathematical Methods in Physics, Department of Physics, University of Warsaw, ul. Pasteura 5, 02-093 Warszawa, Poland}


\begin{abstract}
We show how to reduce the nonlinear Wei-Norman equations, expressing the
solution of a linear system of non-autonomous equations on a Lie algebra, to a
hierarchy of matrix Riccati equations using the cominuscule induction. The
construction works for all reductive Lie algebras with no simple factors of
type $G_2$, $F_4$ or $E_8$. A corresponding hierarchy of nonlinear, albeit no
longer Riccati equations, is given for these exceptional cases.

\end{abstract}

\begin{keyword}
Lie equations \sep linear non-autonomous system\sep Wei-Norman equations\sep
Riccati equations, parabolic subgroups \MSC[2010] 22E30\sep 47D06\sep 22E10\sep
34G10\sep 33C80
\end{keyword}

\end{frontmatter}

\section{Introduction}

The Wei-Norman method \cite{Wei1963} was developed by its authors to reduce a
systems of linear differential equations with variable coefficients to a
nonlinear one. At first sight advantages of such a reduction seem questionable,
but in fact, in many applications it provides an useful method of analyzing
linear non-autonomous systems, alternative to the commonly used in physics
expansion in a series of time-ordered multiple integrals \cite{Wei1963}.
Another advantage can follow from the fact that the nonlinear system can be
integrated in steps, each involving smaller number of variables than the
original linear system. In \cite{ck13} we were able to show that in the unitary
case, i.e.\ when the original linear system of equations describes an evolution
within the unitary group, the Wei-Norman equations can be reduced to a system
of nonlinear matrix Riccati equations. In \cite{Charzynski2014} we generalized
this result to all classical Lie groups. We also observed that the resulting
equations are in general not of the matrix Riccati form for the non-classical
simple Lie groups showing this fact explicitly for the $G_2$ group, but leaving
other non-classical cases inconclusive. In the present paper we give a unified
approach to the problem based on the cominuscule induction, which works for all
complex reductive Lie groups having no simple factors of type $G_2$, $F_4$ or
$E_8$. Moreover for the cases of simple Lie groups for which the cominuscule
induction is not applicable, we show that employing the \emph{contact grading}
of the corresponding Lie algebra gives the Wei-Norman equations in form of
coupled first-order equations of at most fourth degree (compared to two in the
cominuscule case). Since the contact grading exists for all simple Lie algebras
our results exhibit the structure of the Wei-Norman equations for all reductive
complex Lie groups.

\sloppy
Let $G$ be a complex reductive Lie group, and $\fg$ its Lie algebra. Denote by
$R_g : G \to G$ the map $h \mapsto hg$. Given a continuous map $X : I \to \fg$ from an
interval $I \subset \RR$ containing $0$, the \emph{development of $X$ in $G$} is a
solution $x : I \to G$ of the ODE
$$
\frac{dx}{dt} = R_{x*} X,\qquad x(0)=e_G.
$$
Given a vector space $E$, a \emph{vector Riccati equation} for a function $\xi
: I \to E$ is an ODE of the form
$$
\frac{d\xi(t)}{dt} = \gamma(t) + \alpha(t)(\xi) + \beta(t)(\xi,\xi)
$$
where $\gamma : I \to E$, $\alpha : I \to E^* \otimes E$, $\beta : I \to S^2E^*
\otimes E$ are given functions corresponding to terms of degree, respectively,
zero, one and two in $\xi$.

We shall prove the following.
\begin{thm}\label{thm:main}
Assume no simple factor of $G$ is of type $G_2$, $F_4$ or $E_8$. Then computing
the development of $X$ in $G$ can be, locally on $I$, reduced to solving a
system of finitely many vector Riccati equations on functions $\xi_i : I \to
\fg$ of the form
\[
\frac{d\xi_i(t)}{dt} = \gamma_i(t) + \alpha_i(t)(\xi_i) +
\beta_i(t)(\xi_i,\xi_i),\quad \xi_i(0)=0
\]
for $i=1,\dots,r$, where the coefficients $\gamma_i(t),\alpha_i(t),\beta_i(t)$
can be expressed explicitly in terms of $\xi_j(t)$, $j<i$, and the structure
constants of $\fg$ in a Chevalley basis. The solution is then given in the
Wei-Norman form $x(t) = e^{\xi_1(t)}\cdots e^{\xi_r(t)}$. Furthermore $r\leq
2\, \mathrm{rank}\,G^{ss}+\mathrm{rank}\,G$, where $G^{ss}$ is the semisimple
part of $G$.
\end{thm}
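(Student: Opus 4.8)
The plan is to prove this by induction on a filtration of $\fg$ built from parabolic subalgebras, turning each inductive step into a single vector Riccati equation. The key structural tool is what the introduction calls \emph{cominuscule induction}: when a simple factor of $\fg$ is not of type $G_2$, $F_4$, or $E_8$, it admits a $\ZZ$-grading $\fg = \fg_{-1}\oplus\fg_0\oplus\fg_{+1}$ associated to a cominuscule (minuscule) simple root, so that $\fp=\fg_0\oplus\fg_{+1}$ is a maximal parabolic with abelian nilradical $\fg_{+1}$. First I would reduce to the simple case: since $G$ is reductive, $\fg$ splits as a direct sum of a center and simple ideals, the development factors through the product, and both the Riccati hierarchy and the bound on $r$ are additive over factors (the central/torus part contributing the isolated $\mathrm{rank}\,G$ abelian directions that integrate trivially). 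So it suffices to treat a single non-exceptional simple $\fg$ and show it contributes at most $2\,\mathrm{rank}\,\fg$ Riccati equations.

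\medskip

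Next, for a single simple factor, I would set up the induction step geometrically. Fix the cominuscule grading $\fg=\fg_{-1}\oplus\fg_0\oplus\fg_{+1}$. The corresponding parabolic $P=\exp(\fp)\cdot(\text{Levi})$ gives a generalized flag variety $G/P$ whose big cell is an affine space modeled on $\fg_{+1}$ (equivalently on $\fg_{-1}$); because the nilradical is \emph{abelian}, the exponential coordinate $\xi\in\fg_{+1}$ on this cell is global and the induced action of $\fg$ on it is by vector fields whose coefficients are affine-quadratic in $\xi$ — precisely the data $\gamma,\alpha,\beta$ of a vector Riccati equation valued in $E=\fg_{+1}$. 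Concretely I would project the development equation $\dot x = R_{x*}X$ to $G/P$, write $x(t)$ in the Bruhat/Iwasawa-type factorization $x=e^{\xi}\,p$ with $\xi\in\fg_{+1}$ and $p\in P$, and read off that $\xi$ satisfies a Riccati equation while the residual motion in $P$ is governed by a \emph{new} development equation for the reductive group $P$ (its Levi) driven by a time-dependent $\fp$-valued field obtained from $X$ by conjugating with $e^{\xi(t)}$ — this is the ``gauge transformation'' that makes the coefficients $\gamma_i,\alpha_i,\beta_i$ depend on the earlier $\xi_j$. This is the heart of the argument and the step I expect to be the main obstacle: one must verify that the quadratic term genuinely closes (no cubic terms survive), which is exactly where abelianness of $\fg_{+1}$ is used, and that the residual field lands in $\fg_0$ after the split so that the induction applies to a strictly smaller reductive group.

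\medskip

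Then I would iterate: the Levi $\fg_0$ is again reductive of strictly smaller semisimple rank, so by the inductive hypothesis its development reduces to a further Riccati hierarchy, and concatenating the factorizations yields the Wei-Norman product $x=e^{\xi_1}\cdots e^{\xi_r}$ with each factor's equation depending only on the preceding ones. The rank bound I would obtain by bookkeeping the recursion: peeling off one cominuscule node removes one simple root from the Dynkin diagram (dropping the rank by one) while the abelian nilradical $\fg_{+1}$ is absorbed into a single vector-valued unknown, so each rank-one reduction costs a bounded number of scalar Riccati equations. Since every non-exceptional simple type admits a cominuscule node at each stage of the peeling, the recursion terminates after at most $\mathrm{rank}\,\fg$ steps; tracking that the $\fg_{\pm1}$ blocks at each stage contribute the factor of two gives $r\le 2\,\mathrm{rank}\,\fg^{ss}$ per semisimple factor, and adding the central directions yields the stated $r\le 2\,\mathrm{rank}\,G^{ss}+\mathrm{rank}\,G$. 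The remaining verification is that the coefficients are polynomial in the structure constants of a Chevalley basis, which follows because every operation used — the grading, the bracket giving $\alpha$ and $\beta$, and the adjoint conjugation by $e^{\xi}$ truncating at finite order on the graded pieces — is defined over the structure constants.
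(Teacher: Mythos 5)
Your proposal is essentially the paper's own argument: the same cominuscule grading $\fg=\fg_{-1}\oplus\fg_0\oplus\fg_1$, the same factorization of $x(t)$ through the dense cell $U'\!\cdot L\cdot U$ with the first factor satisfying a vector Riccati equation because $\ad_X^3=0$ on the graded pieces, the same recursion on the Levi factor terminating at tori, and the same rank bookkeeping for $r$; your big-cell/quadratic-vector-field phrasing and the preliminary splitting into simple factors are only cosmetic repackagings of the paper's direct computation with $x=WyV$. The one imprecision — the residual field after gauging by $e^{\xi}$ is $\fp$-valued, not $\fg_0$-valued — resolves exactly as your abelianness remark anticipates: the $\fg_1$-component is removed by the quadrature $V=\int \Ad_y^{-1}Z\,dt$, which is precisely the paper's third elimination step.
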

We stress that the method is intrinsic to the Lie algebra $\fg$, and does not rely on
any particular representation (i.e.\ does not proceed by reduction to the classical
result for matrix equations). Our approach uses the existence of so-called cominuscule
parabolic subgroups, which appear precisely in those reductive groups that satisfy the
hypothesis of the Theorem.

Let us briefly recall the notion of a general parabolic subgroup of a reductive complex
Lie group $G$: it is a proper closed subgroup $P\subset G$ such that the homogeneous
space $G/P$ is a compact complex manifold (in fact, a projective algebraic variety). The
group $P$ admits a Levi decomposition $P = L \ltimes U$ where $U$ is the unipotent
radical of $P$, while $L\simeq P/U$ is a reductive group called the \emph{Levi factor}
of $P$. On the level of Lie algebras, one may associate with $P$ a grading
$$
\fg=\fg_{-k}\oplus\cdots\oplus\fg_{-1}\oplus\fg_0\oplus\fg_1\oplus\cdots\oplus\fg_k,
\quad [\fg_i,\fg_j]\subset \fg_{i+j}
$$
for certain $k > 0$, such that $\fg_0\oplus\cdots\oplus\fg_k$ is the Lie algebra of $P$,
and $\fg_0$ is the Lie algebra of $L$ (it is understood that $\fg_i = 0$ whenever $|i|
> k$). In fact, the grading as above determines the subgroup $P$ up to isogeny,
and a maximal compatible $P$ may be chosen by requiring that $G/P$ is connected and
simply-connected.

Associated with $P$ is another parabolic subgroup $P^\prime$, called the \emph{opposite
parabolic}, with the property that $P\cap P^\prime = L$ and $P^\prime\cdot P$ is dense
in $G$. Writing $P^\prime = U^\prime \rtimes L$ for the Levi decomposition of the
opposite parabolic, one has that the map
$$
U^\prime\times L\times U \rightarrow G
$$
induced by the group operation is an embedding onto a dense open subset. The grading
corresponding to $P^\prime$ is simply the opposite of the grading associated with $P$,
so that in particular $\fg_{-k}\oplus\cdots\oplus\fg_{-1}$ is the Lie algebra of
$U^\prime$, and the above open embedding gives
$$
\overset{\mathrm{Lie}\,U^\prime}{\overbrace{\fg_{-k}\oplus\cdots\oplus\fg_{-1}}}\oplus
\overset{\mathrm{Lie}\,L}{\overbrace{\fg_0}}\oplus
\overset{\mathrm{Lie}\,U}{\overbrace{\fg_1\oplus\cdots\oplus\fg_k}}
$$
on the infinitesimal level.

\section{Proof of the main Theorem}

A  parabolic subgroup of a reductive complex Lie group is called \emph{cominuscule} if
it satisfies the equivalent statements in the following:
\begin{pro}[cf. \cite{rrs,cs}]\label{pro:comin}
Let $P \subset G$ be a  parabolic subgroup. The following are equivalent:
\begin{enumerate}
\item $G/P$ is a Hermitian symmetric space of compact type,
\item the unipotent radical of $P$ is abelian,
\item there is a grading $\fg = \fg_{-1} \oplus \fg_0 \oplus \fg_1$ such
    that $\fp = \fg_0 \oplus \fg_1$.
\end{enumerate}
\end{pro}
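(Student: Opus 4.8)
The plan is to split the claim into the two equivalences $(2)\Leftrightarrow(3)$, which I would handle by elementary Lie-algebra arguments, and $(1)\Leftrightarrow(3)$, which draws on the structure theory of Hermitian symmetric spaces. The implication $(3)\Rightarrow(2)$ is immediate: if $\fg=\fg_{-1}\oplus\fg_0\oplus\fg_1$ is such a grading then the unipotent radical $U$ has Lie algebra $\fu=\fg_1$, and $[\fg_1,\fg_1]\subset\fg_2=0$ makes $\fu$, hence the connected group $U$, abelian.

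For $(2)\Rightarrow(3)$ the key input is that a parabolic grading $\fg=\bigoplus_{i=-k}^{k}\fg_i$ is generated in degree one, i.e.\ $[\fg_1,\fg_i]=\fg_{i+1}$ for every $i\geq 1$. Granting this, if $\fu=\fg_1\oplus\cdots\oplus\fg_k$ is abelian then $\fg_2=[\fg_1,\fg_1]=0$, and inductively $\fg_{i+1}=[\fg_1,\fg_i]=0$ for all $i\geq1$, forcing $k=1$ and hence $(3)$. I would prove the generation lemma by induction on the degree in the root-space picture: fixing a Cartan subalgebra $\ft\subset\fg_0$, the degree of a root equals the sum of the coefficients of the simple roots marking $P$ in its expansion, and one checks that every degree-$(i+1)$ root space is reached from $\fg_i$ by bracketing with a degree-one root vector, so that $\fg_1$ generates the entire strictly positive part of the grading.

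For $(3)\Rightarrow(1)$ I would use the grading element: there is a unique $H$ in the centre of $\fg_0$ with $\ad H=j\cdot\id$ on $\fg_j$. Then $\theta=\Ad(\exp(\pi i H))$ is an involutive automorphism fixing $\fg_0$ and negating $\fg_{-1}\oplus\fg_1$, so that, passing to the compact real form, $G/P$ acquires the structure of a symmetric space with isotropy algebra $\fg_0$; the operator $J=\ad(iH)$ acts as $\pm i$ on $\fg_{\pm1}$, squares to $-\id$, and furnishes the invariant complex structure, while $G/P$ is of compact type because it is a projective variety. Conversely, for $(1)\Rightarrow(3)$, the symmetric decomposition is $\fg=\fg_0\oplus\fm$ with $[\fm,\fm]\subset\fg_0$, and the invariant complex structure is $J=\ad Z$ for some $Z$ central in $\fg_0$. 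Splitting $\fm=\fm^+\oplus\fm^-$ into the $\pm i$-eigenspaces of $J$, for $X,Y\in\fm^+$ the Jacobi identity gives $\ad(Z)[X,Y]=2i[X,Y]$, whereas $[X,Y]\in\fg_0$ together with centrality of $Z$ gives $\ad(Z)[X,Y]=0$; hence $[\fm^+,\fm^+]=0$, and likewise $[\fm^-,\fm^-]=0$. Setting $\fg_{\pm1}=\fm^{\pm}$ then yields the $|1|$-grading with $\fp=\fg_0\oplus\fg_1$.

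I expect the main obstacle to lie on the two nontrivial sides, $(2)\Rightarrow(3)$ and $(1)\Rightarrow(3)$. The former reduces to the generation lemma, whose root-theoretic proof is where the real work sits; the latter depends on correctly importing the classification-free structure theory of Hermitian symmetric spaces, the decisive point being that integrability of the complex structure forces the holomorphic and antiholomorphic tangent spaces $\fm^{\pm}$ to be abelian. Once both abelian-ness facts are in hand, the remaining bookkeeping --- matching $\fp$ with $\fg_0\oplus\fg_1$ and verifying the bracket relations $[\fg_i,\fg_j]\subset\fg_{i+j}$ --- is routine.
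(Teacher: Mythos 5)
You should first note that the paper contains no proof of this proposition to compare against: it is imported from the literature with a ``cf.'' citation to Richardson--R\"ohrle--Steinberg and \v{C}ap--Slov\'ak, and the paper uses it as a black box. Judged on its own merits, your outline is correct and in fact follows the same division of labour as the standard references: $(2)\Leftrightarrow(3)$ via the generation property of parabolic gradings is exactly the \v{C}ap--Slov\'ak argument (and the lemma is true in the strong form you state: for $\alpha\in\Phi^+$ with $\Ht_\Sigma(\alpha)=i+1\geq 2$ one finds a degree-one root $\beta$ with $(\alpha,\beta)>0$, so that $\alpha-\beta$ is a root of degree $i$ and $\fg_\alpha=[\fg_\beta,\fg_{\alpha-\beta}]$ --- this inner-product step is the ``real work'' you rightly flag), while $(1)\Leftrightarrow(3)$ via the grading element and the classical fact that the complex structure of a Hermitian symmetric space is $\ad Z$ for $Z$ central in the isotropy algebra is the Harish-Chandra/Helgason route. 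Your eigenvalue computation killing $[\fm^+,\fm^+]$ is correct as written.

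Two soft spots deserve more than the ``routine bookkeeping'' you allot them. First, in $(1)\Rightarrow(3)$ the constructed grading must be tied to the \emph{given} $P$: since $K=G_u\cap P$ is the isotropy group of the base point, $\mathfrak{k}^{\mathbb{C}}\subset\fp$ and in particular $Z\in\fp$, so $\fp$ is $\ad Z$-stable and decomposes as $\mathfrak{k}^{\mathbb{C}}\oplus(\fp\cap\fm^+)\oplus(\fp\cap\fm^-)$; one then identifies $\fg/\fp$ with the holomorphic tangent space at the base point to see that the kernel of $\fm^{\mathbb{C}}\to\fg/\fp$ is exactly one $J$-eigenspace, which is what yields $\fp=\fg_0\oplus\fg_1$ (rather than some other $\ad Z$-stable complement) after fixing orientation conventions. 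Second, in $(3)\Rightarrow(1)$ your justification ``of compact type because it is a projective variety'' is a non sequitur: compactness of the manifold is weaker than compact type of the symmetric space. The correct line is that the center of $G$ acts trivially on $G/P$, so one may assume $G$ semisimple; a compact real form $G_u$ chosen $\theta$-stably acts transitively on $G/P$ with isotropy $K=G_u\cap P$, and $G_u/K$ with $G_u$ compact semisimple is by definition of compact type. With these two repairs (plus the one-line remark that in $(3)\Rightarrow(2)$ the identification $\fu=\fg_1$ follows since $\fg_1$ is a nilpotent ideal of $\fp$ whose dimension, by the Killing pairing of $\fg_1$ with $\fg_{-1}$, equals $\dim\fg-\dim\fp=\dim\fu$), your proof is complete and is essentially the one a reader would find by unwinding the cited sources.
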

The proof of the Theorem~\ref{thm:main} relies on \emph{cominuscule induction} (see
e.g.\ \cite{hwang-mok-rigidity} for a geometric application). The following will be
proven later.
\begin{pro}
\label{pro:ind} Let $\sP$ be a property of complex reductive Lie groups such
that:
\begin{enumerate}
\item $\sP$ holds for all tori,
\item if $\sP$ holds for the Levi factor of some cominuscule parabolic
    subgroup in $G$, then it holds for $G$.
\end{enumerate}
Then $\sP$ holds for all reductive complex Lie groups with no simple factors of
type $G_2$, $F_4$ or $E_8$.
\end{pro}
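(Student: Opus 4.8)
The plan is to argue by induction on the semisimple rank $\mathrm{rank}\,G^{ss}$, a nonnegative integer that I will arrange to drop by exactly one at each step. The base case is $\mathrm{rank}\,G^{ss}=0$: then $G^{ss}$ is trivial and $G$ is a torus, so $\sP$ holds by hypothesis~(1). For the inductive step I would take a reductive $G$ with $\mathrm{rank}\,G^{ss}>0$ and no simple factor of type $G_2$, $F_4$ or $E_8$, and produce a cominuscule parabolic $P\subset G$ whose Levi factor $L$ again has no such simple factor and satisfies $\mathrm{rank}\,L^{ss}=\mathrm{rank}\,G^{ss}-1$. Granting this, the inductive hypothesis applies to $L$, so $\sP$ holds for $L$; hypothesis~(2) then yields $\sP$ for $G$, closing the induction. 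Thus the entire content reduces to the existence of one suitable cominuscule parabolic.

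To construct $P$ I would work with the three-term grading of Proposition~\ref{pro:comin}(3). By the classification of such gradings of a simple Lie algebra, a cominuscule parabolic in a simple factor corresponds to a single simple root occurring with coefficient $1$ in the highest root (a \emph{cominuscule node}), and the resulting $\fg_0$ is, up to an extra one-dimensional center, the semisimple algebra whose Dynkin diagram is obtained by deleting that node. Since $\mathrm{rank}\,G^{ss}>0$, the diagram of $G^{ss}$ has at least one connected component, necessarily of type $A$, $B$, $C$, $D$, $E_6$ or $E_7$ by assumption, and each of these carries at least one cominuscule node (indeed \emph{every} node of $A_n$, the special end node of $B_n$ and of $C_n$, all three leaves of $D_n$, and the minuscule nodes of $E_6$ and $E_7$). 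I would pick one such node in one such component, grade that factor accordingly, and place the remaining simple factors together with the central torus in degree $0$; this defines the cominuscule parabolic $P\subset G$ with Levi factor $L$, and the deletion of exactly one node gives $\mathrm{rank}\,L^{ss}=\mathrm{rank}\,G^{ss}-1$.

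The heart of the argument, and the place where the hypothesis on types is used, is the verification that deleting a cominuscule node never creates a forbidden component; I expect this combinatorial closure to be the only real obstacle. It is a finite check over the admissible types: deleting a node of $A_n$ gives $A_{i-1}\times A_{n-i}$; the cominuscule node of $B_n$ gives $B_{n-1}$; that of $C_n$ gives $A_{n-1}$; the cominuscule nodes of $D_n$ give $D_{n-1}$ or $A_{n-1}$; and those of $E_6$, $E_7$ give $D_5$ and $E_6$ respectively. In every case the components that appear are again of type $A$, $B$, $C$, $D$, $E_6$ or $E_7$, while the untouched simple factors and the central torus pass into $L$ unchanged, so $L$ inherits the hypothesis and no forbidden factor is ever introduced. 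This is precisely why the three exceptional types must be excluded: in $G_2$, $F_4$ and $E_8$ every simple root occurs with coefficient $\geq 2$ in the highest root, so no cominuscule node exists and the induction can neither start nor remain within the admissible class.
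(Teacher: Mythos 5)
Your proof is correct and takes essentially the same route as the paper: induction on semisimple rank, passing to the Levi factor of a cominuscule parabolic subgroup. The only difference is cosmetic — you spell out explicitly the finite Dynkin-diagram check (existence of a cominuscule node in each admissible type, and closure of the admissible class under deletion of such a node) that the paper delegates to Lemma~\ref{lem:ind} with proof ``by inspection of Dynkin diagrams,'' and your bookkeeping makes the rank drop exactly one per step, which is a harmless sharpening of the paper's ``semisimple rank less than $n$.''
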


\begin{proof}[Proof of Theorem \ref{thm:main}]
Given a complex reductive $G$ we let $\sP(G)$ stand for the conclusion of the Theorem.
Clearly, computing the development in a torus reduces to integrating and exponentiating
a function, hence to a degenerate vector Riccati equation, so that $\sP$ holds for all
tori.

Let now $P \subset G$ be a cominuscule  parabolic subgroup. We use the grading
$$ \fg = \fg_{-1} \oplus \fg_0 \oplus \fg_1 $$
such that $\fp = \fg_0\oplus\fg_1$ (cf. Prop.~\ref{pro:comin}). One then has that
$\fg_0$ is the Lie algebra of the Levi factor $L$ of $P$, while $\fg_1$ is the Lie
algebra of its unipotent radical $U$. The opposite parabolic $P'$ has the property that
$P' \cap P = L$ and $L$ is the Levi factor of $P'$. One then has $\fp = \fg_{-1} \oplus
\fg_0$ and $\fg_{-1}$ is the Lie algebra of the unipotent radical $U'$ of $P'$. The
group operation induces an embedding
$$ U' \times L \times U \hookrightarrow G $$
onto a dense open subset. The exponential map gives Lie group
\emph{isomorphisms} $\fg_{-1} \simeq U'$, $\fg_1 \simeq U$, where the Lie group
structure on $\fg_{\pm1}$ is induced by the vector space structure. Observe
that $\ad_X$ is a derivation of degree $\pm1$ for $X \in \fg_{\pm1}$, while
$\fg$ is graded in degrees $-1,0,1$, so that $\ad_X^3=0$. It follows that
$$
\Ad_{\exp X} = \sum_k \frac{1}{k!} \ad_X^k =
\id + \ad_X + \frac{1}{2}\ad_X^2,\qquad X \in \fg_{\pm1}.
$$
We shall use the exponential to view elements of $\fg_{\pm1}$ as elements of
$G$, so that
$$
\Ad_XY = Y + [X,Y] + \frac{1}{2}[X,[X,Y]],\qquad X \in \fg_{\pm1},\ Y \in \fg.
$$

Consider now the development equation
$$
\frac{dx}{dt} = R_{x*}X,\quad x(0)=e_G.
$$
Possibly shrinking the domain interval $I$, we can assume $x(t) \in U'LU$, so
that the equation becomes equivalent to
$$
R_{WyV*}^{-1} \frac{d}{dt} WyV = Z'+ Y+ Z,\quad W(0)=0,\ y(0)=e_L,\ V(0)=0
$$
where $$X = Z'+Y+Z,\quad x = WyV$$ for $$Z',W : I \to \fg_{-1},\quad Z,V : I
\to \fg_1$$ and $$Y : I \to \fg_0,\quad y: I \to L.$$ We compute:
$$
R_{WyV*}^{-1} \frac{d}{dt} WyV =
\frac{dW}{dt} + \Ad_W R_{y*}^{-1}\frac{dy}{dt}
+\Ad_W \Ad_y \frac{dV}{dt}.
$$
Comparing with $Z'+Y+Z$ and decomposing according to the grading, we then  have
\begin{eqnarray*}
\frac{dW}{dt} + [W, R_{y*}^{-1}\frac{dy}{dt}] + \frac{1}{2}
[W,[W, \Ad_y\frac{dV}{dt}]] &=& Z' \\
R_{y*}^{-1} \frac{dy}{dt} + [W, \Ad_y\frac{dV}{dt}] &=& Y \\
\Ad_y \frac{dV}{dt} &=& Z.
\end{eqnarray*}
Elimination yields the equivalent system
\begin{eqnarray*}
\frac{dW}{dt} + [W,Y] - \frac{1}{2} [W,[W,Z]] &=& Z' \\
R_{y*}^{-1} \frac{dy}{dt} + [W, Z] &=& Y \\
\Ad_y \frac{dV}{dt} &=& Z.
\end{eqnarray*}
It follows that solving the development equation is equivalent to the following
sequence:
\begin{enumerate}
\item solving the Riccati equation
$$ \frac{dW}{dt} = Z' - [W,Y] + \frac{1}{2} [W,[W,Z]] $$
on $W : I \to \fg_{-1}$,
\item solving the development equation
$$ \frac{dy}{dt} = R_{y*} (Y - [W,Z]) $$
on $y : I \to L$,
\item integrating
$$ V = \int \Ad_y^{-1}Z dt. $$
\end{enumerate}
Clearly, steps (2) and (3) can be considered as degenerate Riccati equations
themselves. Denoting the conclusion of the Theorem with $\sP(G)$, we have just
proven that $\sP$ satisfies the hypotheses of Prop. \ref{pro:ind}. Hence it
holds for all reductive complex Lie groups whose simple factors admit
cominuscule parabolic subgroups.
\end{proof}

\section{Proof of Parabolic Induction}
It remains to prove Proposition \ref{pro:ind}. We recall the description of
parabolic subgroups in terms of root data:
\begin{pro}[cf.~\cite{cs}]
Let $G \supset B \supset T$ be a complex reductive Lie group, a Borel subgroup, and a
maximal torus. Denote by $\Phi$ the root system of $\fg$ with respect to $T$, and by
$\Phi^+$ the subset of positive roots determined by $B$ (i.e.\ $B$ is generated by $T$
and by root subgroups corresponding to positive roots). Let $\Delta \subset \Phi^+$ be
the simple roots. Define for each subset $\Sigma \subset \Delta$ the function
$\Ht_\Sigma : \Phi \to \ZZ$ sending a root $\alpha\in\Phi$ to the sum of the
coefficients of elements of $\Sigma$ in the simple root decomposition of $\alpha$. Let
$P_\Sigma \subset G$ be the subgroup generated by $B$ and root subgroups $G_{-\alpha}$
with $\Ht_\Sigma(\alpha)=0$. Then:\begin{enumerate}
\item $P_\Sigma$ is parabolic,
\item $\Sigma \mapsto P_\Sigma$ gives a one-to-one correspondence between
    nonempty subsets of $\Delta$ and conjugacy classes of parabolic subgroups
    in $G$,
\item the Dynkin diagram of the Levi factor $L_\Sigma$ of $P_\Sigma$ is
    obtained by removing nodes corresponding to $\Sigma \subset \Delta$ from
    the Dynkin diagram of $G$.
\end{enumerate}
\end{pro}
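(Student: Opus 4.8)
The plan is to encode the combinatorial height function $\Ht_\Sigma$ as an infinitesimal grading and to read off all three assertions from the root-space decomposition. First I would pick the element $H_\Sigma=\sum_{\beta\in\Sigma}\varpi_\beta^\vee\in\ft$, where $\varpi_\beta^\vee$ are the fundamental coweights dual to the simple roots; then $\alpha(H_\Sigma)=\Ht_\Sigma(\alpha)$ for every $\alpha\in\Phi$, so $\ad_{H_\Sigma}$ is semisimple with integer eigenvalues and yields a $\ZZ$-grading $\fg=\bigoplus_i\fg_i$, $[\fg_i,\fg_j]\subset\fg_{i+j}$, with $\fg_i=\bigoplus_{\Ht_\Sigma(\alpha)=i}\fg_\alpha$ and $\ft\subset\fg_0$. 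The non-negative part $\fp_\Sigma:=\bigoplus_{i\ge0}\fg_i$ is a subalgebra containing $\fb$ (all positive roots have $\Ht_\Sigma\ge0$) together with exactly those negative root spaces $\fg_{-\alpha}$, $\alpha\in\Phi^+$, for which $\Ht_\Sigma(\alpha)=0$. Since the Lie subalgebra generated by $\fb$ and these negative root spaces is squeezed between that generating set and $\fp_\Sigma$, it equals $\fp_\Sigma$; thus $\fp_\Sigma$ is the Lie algebra of $P_\Sigma$ and $B\subset P_\Sigma$.

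Claims (1) and (3) then follow quickly. For (1) I would invoke the standard fact that a proper closed subgroup containing a Borel is parabolic: here $G/P_\Sigma$ is the image of the projective variety $G/B$, hence itself a compact complex projective homogeneous space, and $P_\Sigma\ne G$ because $\Sigma\ne\emptyset$. For (3), the Levi factor $L_\Sigma$ has Lie algebra $\fg_0=\ft\oplus\bigoplus_{\Ht_\Sigma(\alpha)=0}\fg_\alpha$, and the roots with $\Ht_\Sigma(\alpha)=0$ are precisely those supported on $\Delta\setminus\Sigma$; they constitute the sub-root-system with simple roots $\Delta\setminus\Sigma$, whose Dynkin diagram is obtained by deleting the nodes of $\Sigma$, the remaining central torus accounting for the non-semisimple part of $L_\Sigma$.

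The substance is the bijection (2). I would first reduce to standard parabolics: every parabolic contains a Borel (Borel fixed-point theorem applied to the complete variety $G/P$), all Borels are $G$-conjugate, so every conjugacy class meets the set of parabolics containing the fixed $B$; and two such are conjugate only if equal, since a $g$ with $gPg^{-1}=P'$ carries $B$ to a Borel of $P'$, which is $P'$-conjugate to $B$, so (using $N_G(B)=B$) one may take $g\in P'$, forcing $P=P'$. It then remains to show $\Sigma\mapsto P_\Sigma$ is a bijection from nonempty subsets of $\Delta$ onto standard parabolics different from $G$. Injectivity and the proviso $\Sigma\ne\emptyset\Leftrightarrow P_\Sigma\ne G$ are immediate from the first paragraph, since $\Sigma=\{\beta\in\Delta:\fg_{-\beta}\not\subset\fp_\Sigma\}$ recovers $\Sigma$ from $P_\Sigma$.

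Surjectivity is the crux and the step I expect to demand the most care. Given a subalgebra $\fp\supset\fb$, its $\ft$-stability forces $\fp=\fb\oplus\bigoplus_{-\alpha\in S}\fg_{-\alpha}$ for some $S\subset\Phi^-$, and the subalgebra condition is exactly that $R:=\Phi^+\cup S$ is closed under root addition. Setting $\Sigma=\{\beta\in\Delta:-\beta\notin S\}$, I must prove $S=\{-\alpha:\alpha\in\Phi^+,\ \Ht_\Sigma(\alpha)=0\}$. The inclusion ``$\supseteq$'' is obtained by assembling each $-\alpha$ with $\mathrm{supp}(\alpha)\subset\Delta\setminus\Sigma$ from the simple negative roots $-\beta$, $\beta\notin\Sigma$, through iterated brackets of negative root vectors, using that $[\fg_\mu,\fg_\nu]=\fg_{\mu+\nu}\ne0$ whenever $\mu+\nu\in\Phi$. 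For the reverse inclusion I would induct on $\Ht(\alpha)$: writing a non-simple $\alpha=\gamma+\delta$ with $\gamma$ simple and $\delta\in\Phi^+$, bracketing $\fg_\gamma\subset\fb$ against $\fg_{-\alpha}$ gives $-\delta\in S$, while bracketing $\fg_\delta\subset\fb$ against $\fg_{-\alpha}$ gives $-\gamma\in S$, i.e.\ $\gamma\notin\Sigma$; the inductive hypothesis applied to $-\delta$ disposes of the remaining support, so $\Ht_\Sigma(\alpha)=0$. This is where the closedness of $S$ and the nonvanishing of the structure constants must be combined, and it is the only genuinely nontrivial point; once it is in place $\fp=\fp_\Sigma$ and (2) is complete.
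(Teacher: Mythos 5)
The paper contains no proof of this proposition: it is quoted from \v{C}ap--Slov\'ak \cite{cs} as standard structure theory, so there is no internal argument to compare yours against. Judged on its own merits, your proof is correct, and it is in fact the standard argument, organized much as in \cite{cs}: the grading element $H_\Sigma=\sum_{\beta\in\Sigma}\varpi_\beta^\vee$ realizing $\Ht_\Sigma$ as an eigenvalue function, the resulting $\ZZ$-grading with $\fp_\Sigma=\bigoplus_{i\ge 0}\fg_i$ (which also matches how the paper itself attaches gradings to parabolics in its introduction), the reduction of (2) to standard parabolics via the Borel fixed-point theorem together with conjugacy of Borel subgroups and $N_G(B)=B$, and the height induction showing that every $\ft$-stable subalgebra $\fp\supset\fb$ equals some $\fp_\Sigma$ --- your brackets $[\fg_\gamma,\fg_{-\alpha}]=\fg_{-\delta}$ and $[\fg_\delta,\fg_{-\alpha}]=\fg_{-\gamma}$, with nonvanishing structure constants (valid in characteristic zero), are exactly the right mechanism, and you correctly identify this as the one genuinely nontrivial step. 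Three points are glossed and deserve a sentence or a citation, though none is a real gap: (i) $P_\Sigma$ is closed and connected (a subgroup generated by closed connected subgroups of an algebraic group is such), and to convert the Lie-algebra equality $\fp=\fp_\Sigma$ into the group equality $P=P_\Sigma$ you need the standard fact that parabolic subgroups of a connected group are connected; (ii) in the inclusion $\supseteq$ you implicitly use the lemma that a positive root $\alpha$ can be written $\beta_1+\cdots+\beta_h$ with every partial sum a root --- the $\beta_i$ then automatically lie in $\mathrm{supp}(\alpha)\subset\Delta\setminus\Sigma$, so the iterated brackets stay inside $\fp$; (iii) under the paper's convention that parabolics are \emph{proper} subgroups, $\Sigma=\emptyset$ gives $P_\emptyset=G$, so item (1) must be read with $\Sigma\ne\emptyset$, a point your observation $\Sigma\ne\emptyset\Leftrightarrow P_\Sigma\ne G$ already handles correctly.
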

Parabolic subgroups of the form $P_\Sigma$ are called standard; every parabolic subgroup
is standard for some choice of a maximal torus and positive roots. In particular,
cominuscule parabolic subgroups are characterized as follows:
\begin{lem}[cf.~\cite{rrs,cs}]
Let $P_\Sigma$ be a standard parabolic for $\Sigma \subset \Delta$. The
following are equivalent:
\begin{enumerate}
\item $P_\Sigma$ is cominuscule,
\item $\Ht_\Sigma(\Phi_+) = \{0,1\}$,
\item marking the nodes corresponding to $\Sigma$ in the Dynkin diagram
    $\Gamma$ of $G$, each connected component of $\Gamma$ contains either
    no marked nodes, or a single marked cominuscule node, where the
    cominuscule nodes for connected diagrams are as follows (in Bourbaki
    labelling):
\begin{eqnarray*}
A_n &: & \alpha_i,\ 1 \le i \le n \\
B_n &: & \alpha_1 \\
C_n &: & \alpha_n \\
D_n &: & \alpha_1, \alpha_{n-1}, \alpha_n \\
E_6 &: & \alpha_1, \alpha_6 \\
E_7 &: & \alpha_7.
\end{eqnarray*}
\end{enumerate}
\end{lem}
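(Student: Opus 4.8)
The plan is to prove the two equivalences $(1)\Leftrightarrow(2)$ and $(2)\Leftrightarrow(3)$ by translating the geometric notion of cominuscularity into the combinatorics of the height function $\Ht_\Sigma$, using Proposition~\ref{pro:comin} throughout to replace ``$P_\Sigma$ is cominuscule'' by the existence of a three-term grading.

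For $(1)\Leftrightarrow(2)$ I would assign to each root space $\fg_\alpha$ the degree $\Ht_\Sigma(\alpha)$ and place the Cartan $\ft$ in degree $0$. Since $\Ht_\Sigma$ is additive on $\Phi$, this is a grading $\fg=\bigoplus_m \fg^{(m)}$ of Lie algebras, and by the very definition of $P_\Sigma$ its Lie algebra is $\bigoplus_{m\ge 0}\fg^{(m)}$, with Levi part $\fg^{(0)}$. By Proposition~\ref{pro:comin} the parabolic $P_\Sigma$ is cominuscule exactly when this grading is supported in degrees $\{-1,0,1\}$, i.e.\ when $\Ht_\Sigma(\Phi)\subseteq\{-1,0,1\}$. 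Because $\Ht_\Sigma(-\alpha)=-\Ht_\Sigma(\alpha)$ and $\Ht_\Sigma\ge 0$ on $\Phi^+$, this is precisely condition~(2) (any node of $\Sigma$ already realises the value $1$, so the effective constraint is simply that no positive root exceeds height $1$).

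For $(2)\Leftrightarrow(3)$ I would first reduce to the connected case. Each root is supported on a single connected component of $\Gamma$, and $\Ht_\Sigma$ restricted to that component only registers the marked nodes lying in it; hence $\Ht_\Sigma(\Phi^+)\subseteq\{0,1\}$ holds iff it holds component by component, and we may assume $\fg$ simple with $\Gamma$ connected. The crucial point is then that the linear form $\Ht_\Sigma$ attains its maximum over $\Phi^+$ at the highest root $\theta$: for an irreducible root system $\theta$ is the unique maximal element of the dominance order, so $\theta-\alpha$ is a nonnegative integral combination of simple roots for every $\alpha\in\Phi^+$, and comparing coordinates node by node gives $\Ht_\Sigma(\alpha)\le\Ht_\Sigma(\theta)$.

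Writing $\theta=\sum_j c_j\alpha_j$ with its marks $c_j\ge 1$, one gets $\max_{\Phi^+}\Ht_\Sigma=\Ht_\Sigma(\theta)=\sum_{\alpha_j\in\Sigma}c_j$, so condition~(2) becomes $\sum_{\alpha_j\in\Sigma}c_j\le 1$. Since each $c_j\ge 1$, this forces $\Sigma$ (within the component) to be either empty or a single node $\alpha_j$ with $c_j=1$, and a node of unit mark is exactly a cominuscule node. The only genuinely computational step, which I expect to be the main (though routine) obstacle, is to read off the unit-mark nodes of the highest root in each type: $A_n$ has all marks $1$; $B_n$ only $c_1=1$; $C_n$ only $c_n=1$; $D_n$ has $c_1=c_{n-1}=c_n=1$; $E_6$ has $c_1=c_6=1$; $E_7$ has only $c_7=1$; whereas $E_8$, $F_4$, $G_2$ have all marks $\ge 2$ and hence no cominuscule node. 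This reproduces the table verbatim and, together with the componentwise reduction, yields $(2)\Leftrightarrow(3)$; the last fact also explains why these three types must be excluded from the induction of Proposition~\ref{pro:ind}.
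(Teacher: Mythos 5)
The paper never proves this lemma: it is quoted from \cite{rrs,cs} (only the ``inspection of Dynkin diagrams'' in Lemma~\ref{lem:ind} relies on it), so your proposal must be judged on its own terms rather than against an in-paper argument. Its architecture is the standard one and is essentially sound: the $\Ht_\Sigma$-grading links (1) and (2); the reduction to connected components, the dominance-order argument ($\theta-\alpha\in\sum\ZZ_{\geq 0}\Delta$ for the highest root $\theta$, hence $\Ht_\Sigma(\alpha)\leq\Ht_\Sigma(\theta)=\sum_{\alpha_j\in\Sigma}c_j$), and your table of unit marks are all correct, including the verification that $G_2$, $F_4$, $E_8$ have all marks $\geq 2$. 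Your reading of (2) as the bound $\Ht_\Sigma(\Phi^+)\subseteq\{0,1\}$ is also the right one: taken literally as an equality of sets, (2) fails in degenerate cases such as $B\subset\SL_2$, where $\Ht_\Sigma(\Phi^+)=\{1\}$ although $B$ is cominuscule --- that imprecision is the paper's, not yours.

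There is, however, one genuine soft spot, in the direction $(1)\Rightarrow(2)$. You write that by Proposition~\ref{pro:comin} the parabolic is cominuscule ``exactly when \emph{this} grading is supported in degrees $\{-1,0,1\}$''. But Proposition~\ref{pro:comin}(3) only asserts the existence of \emph{some} grading $\fg=\fg_{-1}\oplus\fg_0\oplus\fg_1$ with $\fp=\fg_0\oplus\fg_1$; nothing you have said rules out, a priori, that $\Ht_\Sigma$ takes a value $\geq 2$ on some positive root while an exotic grading still witnesses condition (3). (The converse $(2)\Rightarrow(1)$ is fine: the $\Ht_\Sigma$-grading itself is a witness.) The quickest repair goes through Proposition~\ref{pro:comin}(2) instead: if $P_\Sigma$ is cominuscule, its unipotent radical, with Lie algebra $\fu=\bigoplus_{\Ht_\Sigma(\alpha)\geq 1}\fg_\alpha$, is abelian. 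Suppose some $\gamma\in\Phi^+$ had $\Ht_\Sigma(\gamma)\geq 2$, and write $\gamma=\alpha_{i_1}+\cdots+\alpha_{i_k}$ with every partial sum a root (always possible in a root system). Each step changes $\Ht_\Sigma$ by $0$ or $1$, so at the first step reaching height $2$ one finds $\delta\in\Phi^+$ with $\Ht_\Sigma(\delta)=1$ and a simple root $\alpha_j\in\Sigma$ with $\delta+\alpha_j\in\Phi$; then $[\fg_\delta,\fg_{\alpha_j}]=\fg_{\delta+\alpha_j}\neq 0$ with both factors inside $\fu$, contradicting abelianness. (Alternatively one can prove that any grading realizing $\fp_\Sigma$ as $\fg_0\oplus\fg_1$ must coincide with the $\Ht_\Sigma$-grading, via its grading element, but that requires conjugating a Cartan subalgebra into $\fg_0$ and is longer.) With this two-line patch inserted, your proof is complete and correct.
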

As a consequence, we have:
\begin{lem}\label{lem:ind}
Assume no simple factor of $G$ is of type $G_2$, $F_4$, $E_8$. Then $G$ admits
a cominuscule parabolic subgroup, and furthermore for each cominuscule
parabolic subgruop $P\subset G$, the Levi factor $L$ of $P$ has no simple
factor of type $G_2$, $F_4$, $E_8$.
\end{lem}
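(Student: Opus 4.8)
The plan is to reduce everything to the combinatorics of Dynkin diagrams, using the preceding characterization of cominuscule parabolics together with the description of Levi factors by node removal. Since every parabolic subgroup is standard after a suitable choice of Borel subgroup and maximal torus, and since the isomorphism type of the Levi factor is invariant under conjugation, it suffices to treat standard cominuscule parabolics $P_\Sigma$. By the characterization lemma, such a $\Sigma$ marks at most one node in each connected component of the Dynkin diagram $\Gamma$ of $G$, and any marked node is cominuscule. The diagram of the Levi factor $L_\Sigma$ is $\Gamma$ with the marked nodes deleted, so the whole statement decomposes over the connected components of $\Gamma$: an unmarked component survives unchanged in $L_\Sigma$, while a marked component contributes the diagram obtained by deleting a single cominuscule node. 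Thus both assertions reduce to a statement about a single simple factor.

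For existence, I note first that the excluded types $G_2$, $F_4$, $E_8$ are precisely the simple types carrying no cominuscule node in the table. Hence, provided $G$ is not a torus, at least one simple factor is present and is of type $A_n$, $B_n$, $C_n$, $D_n$, $E_6$ or $E_7$, each of which admits a cominuscule node; marking one such node produces a cominuscule $P_\Sigma$. (When $G$ is a torus there are no proper parabolics, and the induction of Proposition~\ref{pro:ind} uses tori only as its base case, so nothing is required there.)

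The substance of the lemma is the inheritance statement, for which I would run through the deletion of each cominuscule node. The classical cases are immediate: $A_n\setminus\{\alpha_i\}=A_{i-1}\sqcup A_{n-i}$, $C_n\setminus\{\alpha_n\}=A_{n-1}$ and $B_n\setminus\{\alpha_1\}=B_{n-1}$, while for $D_n$ one finds $D_n\setminus\{\alpha_1\}=D_{n-1}$ and $D_n\setminus\{\alpha_{n-1}\}=D_n\setminus\{\alpha_n\}=A_{n-1}$, so only types $A$, $B$, $D$ occur. The exceptional cases need care with the Bourbaki labelling: deleting the long-arm tip of $E_7$ gives $E_7\setminus\{\alpha_7\}=E_6$, and deleting either cominuscule node of $E_6$ gives $E_6\setminus\{\alpha_1\}=E_6\setminus\{\alpha_6\}=D_5$. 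In every case the resulting components are of type $A$, $B$, $C$, $D$ or $E_6$, hence never $G_2$, $F_4$ or $E_8$; together with the unmarked components, which are of allowed type by hypothesis on $G$, this shows $L_\Sigma$ has no bad simple factor.

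I expect the only real subtlety to lie in these exceptional deletions, where one must verify that the fork structure of the $E$-diagrams yields exactly $D_5$ and $E_6$ rather than some larger or exceptional diagram. The excluded types can moreover be ruled out structurally, which serves as a useful independent check: $G_2$ is the unique simple type with a triple bond, and node deletion never creates edges, so no deletion of an allowed diagram can produce it; among the allowed types a double bond occurs only in $B_n$ and $C_n$, always at an extreme node, so deleting the cominuscule node leaves $B_{n-1}$ or $A_{n-1}$, in which any double bond remains extreme, never the interior double bond of $F_4$; and since the largest exceptional diagram arising above is $E_6$, while the classical deletions stay within types $A$, $B$, $C$, $D$, the diagram $E_8$ can never appear. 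This confirms the case analysis and completes the reduction.
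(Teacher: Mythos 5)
Your proposal is correct and takes essentially the same route as the paper, whose entire proof is the one-line remark ``By inspection of Dynkin diagrams'': you have simply carried out that inspection in full, and your reduction to standard parabolics, componentwise decomposition, and case-by-case deletions (in particular $B_n\setminus\{\alpha_1\}=B_{n-1}$, $D_n\setminus\{\alpha_1\}=D_{n-1}$, $E_7\setminus\{\alpha_7\}=E_6$, $E_6\setminus\{\alpha_1\}=E_6\setminus\{\alpha_6\}=D_5$) are all accurate in Bourbaki labelling. Your observation that the torus case requires no parabolic, matching its separate role as the base case of Proposition~\ref{pro:ind}, is a legitimate clarification of a point the paper's terse statement glosses over, but it does not change the approach.
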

\begin{proof}By inspection of Dynkin diagrams.\end{proof}

\begin{proof}[Proof of Proposition \ref{pro:ind}]
We shall proceed by induction on the semisimple rank. Note that the Proposition
holds tautologically when restricted to tori. Suppose the Proposition holds
when restricted to reductive complex Lie groups of semisimple rank less than
$n$. Let $\sP$ be a property of complex reductive Lie groups satisfying the
hypotheses of the Proposition, and let $G$ be a reductive complex Lie group of
semisimple rank $n$, and with no simple factors of type $G_2$, $F_4$ or $E_8$.
By Lemma \ref{lem:ind}, there is a cominuscule parabolic subgroup $P \subset
G$, and $\sP$ holds for the Levi factor $L$ of $P$ by the inductive hypothesis
($\dim L^\ss < n$). Hence $\sP$ holds for $G$ by hypothesis (2) of the
Proposition.
\end{proof}

\section{Remark on the remaining cases of simple groups}

As the reader of our proof will notice, it is not difficult to give a weaker general
result for all reductive groups. The reduction to Riccati equations relied on the single
fact that $\mathrm{ad}^3_X = 0$ for $X\in\fg_{\pm 1}$ in the cominuscule grading; the
class of groups under consideration was precisely those whose Lie algebras admit such
gradings. It turns out that the next best thing happens in the general case: every
simple Lie algebra of rank at least two admits a contact grading, i.e.\ one of the form
$$
\fg =\fg_{-2}\oplus\fg_{-1}\oplus\fg_0\oplus\fg_1\oplus\fg_2
$$
with $\mathrm{dim}\fg_{\pm 2} = 1$ (cf. \cite{cs}). We then have $\mathrm{ad}^5_X = 0$
for $X\in\fg_{\pm 1}$ and $\mathrm{ad}^4_X = 0$ for $X\in\fg_{\pm 2}$, whence following
the lines of the proof of Theorem 1 for such a grading one is left with a system of
first-order ODEs of degree at most four (compared to two in the cominuscule case).

\section*{Acknowledgements}

The presented results are obtained in frames of the the Polish National Science Center
project MAESTRO DEC-2011/02/A/ST1/00208 support of which is gratefully acknowledged by
all authors.


\end{document}